\RequirePackage{fix-cm}
\documentclass[smallextended]{svjour3}      
\smartqed  

\usepackage{amsmath}
\usepackage{graphicx}
\usepackage{subfigure}
\usepackage{authblk}
\usepackage{amssymb}
\usepackage{algorithm}
\usepackage{algcompatible}

\begin{document}

\title{A Discretely Entropy Conserving/Stable Flux Reconstruction Scheme via Hybridized Split Form Differential Operator and Generalized Entropy Projection}

\titlerunning{A Discretely Entropy Conserving/Stable FR} 
\author{Geng Liang         \and
        Junjie Wang\and
        Hui Xu
}


\institute{Geng Liang, Junjie Wang\at School of Aeronautics and Astronautics, Shanghai Jiao Tong University, Shanghai, 200240, China.  \email{lianggeng@sjtu.edu.cn,  kaga\_mi@sjtu.edu.cn}         
\and
Hui Xu (Corresponding author)\at School of Aeronautics and Astronautics, Shanghai Jiao Tong University, Shanghai, 200240, China. \email{dr.hxu@sjtu.edu.cn}
}

\date{Received: date / Accepted: date}

\maketitle


\abstract{This paper proposes a novel discretely entropy conserving/stable flux reconstruction (FR) scheme for nonlinear conservation laws. The scheme combines a hybridized split-form differential operator with a generalized entropy projection, enabling discrete entropy conservation for arbitrary nodal distributions and arbitrary correction functions without modifying the original quadrature weights. The formulation is built directly upon the classical FR framework and is shown to preserve conservation properties while achieving entropy conservation in the semi-discrete sense. Numerical experiments using the isentropic vortex problem confirm the theoretically predicted accuracy and demonstrate robust entropy behavior with entropy-dissipative interface fluxes.}

\keywords{ Flux reconstruction \and Entropy stability \and Summation-by-parts}

\subclass{35L65 \and 65M12 \and 65M60 \and 76M10}

\maketitle

\section{Introduction}
The Flux Reconstruction (FR) method, first developed by Huynh, provides a unified framework encompassing major high-order schemes such as Spectral Difference (SD) and Discontinuous Galerkin (DG) methods \cite{FR2007,FR2014}. 
When solving nonlinear PDEs using high-order methods, the numerical scheme must satisfy discretely entropy stability to ensure physically consistent solutions. For simplicity, we directly consider the governing equation in the reference element \(\hat{\Omega}\):
\begin{equation}
    \frac{\partial \boldsymbol{u}}{\partial t}+\sum_{k=1}^d\frac{\partial \boldsymbol{f}_k}{\partial \xi_k}=0,
    \label{conservationlaw}
\end{equation} entropy stability requires \(\partial\eta(\boldsymbol{u})/\partial t+\sum_{k=1}^d\partial\psi_k(\boldsymbol{u})/\partial \xi_k\leq0\), where \(\eta\) denotes the scalar convex entropy function and \(\psi_k\) represents the entropy fluxes with \(\psi_k'=\eta'\boldsymbol{f}_k'\) \cite{ES2003}. A numerical flux satisfies \([\![\boldsymbol{v}]\!]\boldsymbol{f}^*_k=[\![\phi_k]\!]\) to achieve entropy conservation between two states, where \([\![\cdot]\!]\) represents the jump between two values, \(\boldsymbol{v}=\eta'(\boldsymbol{u})\) corresponds to entropy variables, and \(\phi_k=\boldsymbol{v}\boldsymbol{f}_k-\psi_k\) is the entropy potential. However, the derivative terms generated at local solution points in high-order methods inherently violate discretely entropy conservation. The entropy-conserving DG schemes based on the split form achieve discretely entropy conserving property by reformulating the flux function into a symmetric expression compatible with a two-point flux structure \cite{ES2016,SBP2014}. Furthermore, through hybridization of boundary points and entropy projection, discrete entropy conservation can be achieved at Gauss-Legendre nodes that exclude boundary points \cite{ES2018}. However, the split form DG formulation cannot be directly generalized to the FR framework. Current research has achieved the generalization of split-form DG to FR through a weight-adjusted nonlinearly stable flux reconstruction (NSFR) framework by exploiting the equivalence between filtered DG and FR methods \cite{FR2022,FR2025}.

After a careful investigation of the difficulties in extending split-form DG to FR, and motivated by the differential operators for hybridized boundary points, we propose a novel extension approach based on a generalized entropy projection. This formulation preserves the original quadrature weights while extending entropy projection through correction functions. By embedding correction functions more deeply into the hybridized differential operators, the proposed method guarantees discrete entropy conservation for arbitrary nodal distributions and correction functions. 

The present work differs fundamentally from the NSFR approach of \cite{FR2025}. The NSFR method relies on a modified mass matrix to account for the mismatch between solution points and correction functions, whereas our approach preserves the original quadrature weights and instead introduces correction-function-dependent interpolation vectors and solution-point-dependent gradient vectors within a hybridized split-form differential operator. This leads to a mathematically non-equivalent semi-discrete operator that does not require filtered DG equivalence or matrix inversion, and that achieves discrete entropy conservation for arbitrary nodal distributions and correction functions.

\section{Methodology}
\subsection{Classical FR and summation-by-parts property}
In FR semi-discretization, solution values \(\boldsymbol{u}_i(t)\) and the fluxes in the \(k\)-th direction \(\boldsymbol{f}_{i,k}\) are stored at \(N_s\) solution points \(\{\mathcal{X}_i\}_{i=1}^{N_s}\in \hat{\Omega}\), with the polynomial approximation constructed via Lagrange basis functions \(\ell_i(\boldsymbol{\xi})\):  
\begin{equation}
\boldsymbol{u}(\boldsymbol{\xi},t) = \sum_{i=1}^{N_s} \boldsymbol{u}_i(t) \ell_i(\boldsymbol{\xi}), \quad \boldsymbol{f}^D_k(\boldsymbol{\xi},t) = \sum_{i=1}^{N_s} \boldsymbol{f}_{i,k} \ell_i(\boldsymbol{\xi}).
\end{equation}
For \(N_b\) boundary quadrature points \(\{\mathcal{X}^b_m\}_{m=1}^{N_b}\in\partial\hat{\Omega}\) on the reference element's faces, the FR method introduces a set of correction function \(g_{m,k}\) that satisfies condition \cite{FR2016}
\begin{equation}
\sum_{k=1}^dn_k(\boldsymbol{\xi})g_{m,k}(\boldsymbol{\xi})\Bigg|_{\boldsymbol{\xi}=\mathcal{X}_m^b}=1,\quad \sum_{k=1}^dn_k(\boldsymbol{\xi})g_{m,k}(\boldsymbol{\xi})\Bigg|_{\boldsymbol{\xi}=\mathcal{X}^b\backslash\mathcal{X}_m^b}=0,
    \label{correct}
\end{equation}
for \(m=1,\ldots,N_b\), where \(n_{k}\) is the \(k\)-th component of the outer normal vector on the boundary of reference element \(\partial\hat{\Omega}\). The continuous flux is reconstructed as
\begin{equation}
\boldsymbol{f}_k(\boldsymbol{\xi},t)=\boldsymbol{f}^D_k+\boldsymbol{f}^C_k = \sum_{i=1}^{N_s} \boldsymbol{f}_{i,k} \ell_i(\boldsymbol{\xi})+\sum_{m=1}^{N_b}\left(\boldsymbol{f}^*_m\cdot\boldsymbol{n}_m-\boldsymbol{f}^b_m\cdot\boldsymbol{n}_m\right)g_{m,k}(\boldsymbol{\xi}),
\end{equation} 
where the numerical flux \(\boldsymbol{f}^*_{m}\cdot\boldsymbol{n}_m\) is calculated by the local solution \(\boldsymbol{u}^b_m\) at point \(\mathcal{X}_m^b\) and the local solutions of corresponding points in adjacent element. The local flux at the boundary point \(\boldsymbol{f}^b_{m}\) is calculated by the value of the discontinuous flux \(\boldsymbol{f}^D_k\) at point \(\mathcal{X}_m^b\) to ensure conservation. By substituting the obtained continuous flux \(\boldsymbol{f}_k(\boldsymbol{\xi},t)\) and piecewise polynomial approximation solutions \(\boldsymbol{u}(\boldsymbol{\xi},t)\) into the reference element conservation law \eqref{conservationlaw}, the FR discretization can be finally obtained.
For ease of discrete analysis, we represent FR in matrix form and define the solution vector \(\mathbf{U}\) and flux vectors  \(\mathbf{F}_k\) consisting of values \(\boldsymbol{u}_{i}\), \(\boldsymbol{f}_{i,k}\) at the solution points. Furthermore, define \(N_s\times N_s\) differential operator \(\mathbf{D}_k\) in the \(k\)-th direction, \(N_s\) length gradient  vector \(\mathbf{G}_{m}\) of the correction function \(g_{m,k}\), \(N_s\) length interpolation vector \(\mathbf{E}_m\) as follow
\begin{equation}
(\mathbf{D}_k)_{ij}=\frac{\partial\ell_j(\boldsymbol{\xi})}{\partial \xi_k}\Bigg|_{\boldsymbol{\xi}=\mathcal{X}_i},
    (\mathbf{G}_{m})_i=\sum_{k=1}^d\frac{\partial g_{m,k}(\boldsymbol{\xi})}{\partial \xi_k}\Bigg|_{\boldsymbol{\xi}=\mathcal{X}_i}, (\mathbf{E}_{m})_i=\ell_i(\boldsymbol{\xi})\Bigg|_{\boldsymbol{\xi}=\mathcal{X}_m}.
\end{equation}
\begin{theorem}
The classical FR  can be expressed in discrete form as
\begin{equation}
\begin{split}
    &\frac{\partial \mathbf{U}}{\partial t}+\sum_{k=1}^d\mathbf{R}_k+\sum_{m=1}^{N_b}\mathbf{G}_{m}(\boldsymbol{f}^*_{m}\cdot \boldsymbol{n}_m)=0,\\ &\mathbf{R}_k=(\mathbf{D}_k-\sum_{m=1}^{N_b}n_k(\mathcal{X}_m^b)\mathbf{G}_{m}\mathbf{E}_m^T)\mathbf{F}_k
\end{split}
    \label{clfr}
\end{equation}
and maintains conservation properties for arbitrary solution point distributions \(\{\mathcal{X}_i\}_{i=1}^{N_s}\) and correction functions \(g_{m,k}\) satisfying \eqref{correct}.
\end{theorem}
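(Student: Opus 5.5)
Two things need proof: the matrix form \eqref{clfr}, and conservation. The matrix form comes from evaluating the reconstructed flux divergence at the solution points; conservation comes from testing \eqref{clfr} against quadrature weights and applying the divergence theorem to the reconstructed flux.

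\emph{Matrix form.} Substitute the reconstructed flux into \eqref{conservationlaw} and evaluate at each $\mathcal{X}_i$. The discontinuous part contributes $\sum_j \boldsymbol{f}_{j,k}\,\partial_{\xi_k}\ell_j(\mathcal{X}_i)$, which is $(\mathbf{D}_k\mathbf{F}_k)_i$. For the correction part, write $\boldsymbol{f}^b_m\cdot\boldsymbol{n}_m=\sum_{k'} n_{k'}(\mathcal{X}^b_m)\,\boldsymbol{f}^D_{k'}(\mathcal{X}^b_m)$, and use $\boldsymbol{f}^D_{k'}(\mathcal{X}^b_m)=\sum_j \ell_j(\mathcal{X}^b_m)\boldsymbol{f}_{j,k'}=\mathbf{E}_m^T\mathbf{F}_{k'}$. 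The divergence of $\sum_m(\boldsymbol{f}^*_m\cdot\boldsymbol{n}_m-\boldsymbol{f}^b_m\cdot\boldsymbol{n}_m)g_{m,k}$, summed over $k$ and evaluated at $\mathcal{X}_i$, is then
\[
\sum_m (\mathbf{G}_m)_i\Bigl(\boldsymbol{f}^*_m\cdot\boldsymbol{n}_m-\sum_{k'}n_{k'}(\mathcal{X}^b_m)\,\mathbf{E}_m^T\mathbf{F}_{k'}\Bigr).
\]
Relabelling $k'$ as $k$ and grouping by direction gives $\mathbf{R}_k$ exactly, with the numerical-flux term left over. This step is bookkeeping only.

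\emph{Conservation.} Let $\mathbf{W}$ be the weight vector of a quadrature on $\{\mathcal{X}_i\}$ that is exact for the divergence of the reconstructed flux. Contracting \eqref{clfr} with $\mathbf{W}^T$ gives
\[
\frac{d}{dt}\mathbf{W}^T\mathbf{U}=-\int_{\hat\Omega}\sum_k\partial_{\xi_k}\boldsymbol{f}_k\,d\boldsymbol{\xi}=-\int_{\partial\hat\Omega}\sum_k n_k\boldsymbol{f}_k\,ds .
\]
On the boundary, \eqref{correct} makes the normal correction vanish at every other flux point and equal $1$ at $\mathcal{X}^b_m$. Hence $\sum_k n_k\boldsymbol{f}_k(\mathcal{X}^b_m)=\boldsymbol{f}^*_m\cdot\boldsymbol{n}_m$, because the $\boldsymbol{f}^b_m$ terms cancel: $\boldsymbol{f}^b_m$ was defined as the trace of $\boldsymbol{f}^D$ at $\mathcal{X}^b_m$. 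With a boundary quadrature on $\{\mathcal{X}^b_m\}$ of weights $w^b_m$, the element total therefore changes only by $-\sum_m w^b_m\,\boldsymbol{f}^*_m\cdot\boldsymbol{n}_m$. Since the numerical flux is single-valued and antisymmetric across shared faces, these contributions cancel when summed over elements, giving global conservation for any point set and any $g_{m,k}$.

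\emph{Main obstacle.} The hard step is justifying exactness of the volume and boundary quadratures for arbitrary nodes. If exactness fails, I would fall back on the usual FR definition of conservation. Take the polynomial trace $\sum_k n_k\boldsymbol{f}_k$ on each face: it equals $\boldsymbol{f}^*_m\cdot\boldsymbol{n}_m$ at the flux points. If the flux points are unisolvent for the trace space, this trace is the common interpolant of the single-valued numerical fluxes, so it is continuous across faces. Then the exact integral $\int\mathbf{u}$ over the domain changes only by exterior boundary terms. This formulation does not depend on the choice of solution points or correction function.
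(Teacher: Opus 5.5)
Your proof is correct and is essentially the paper's argument: the paper likewise contracts with exact quadrature weights and combines the divergence theorem with \eqref{correct}, except that it works basis function by basis function, obtaining $\mathbf{1}^T\mathbf{W}(\mathbf{D}_k-\sum_{m}n_k(\mathcal{X}^b_m)\mathbf{G}_m\mathbf{E}_m^T)=0$ and $\mathbf{1}^T\mathbf{W}\mathbf{G}_m=w^b_m$ under the same volume and boundary exactness you flag; you instead work with the assembled reconstructed flux, and you also write out the matrix-form bookkeeping that the paper takes for granted. Your ``main obstacle'' is settled for arbitrary nodes by taking interpolatory weights $w_i=\int_{\hat\Omega}\ell_i\,d\boldsymbol{\xi}$, which are exact whenever the flux divergence lies in the span of the $\ell_i$ (as FR correction functions are built to ensure); your fallback rests on this same condition, because $\partial_t\boldsymbol{u}$ is only the interpolant of $-\sum_k\partial_{\xi_k}\boldsymbol{f}_k$, so it does not actually remove the dependence on the correction functions.
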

\begin{proof}
    Introduce Gaussian quadrature weights at solution points \(w_i\) and boundary points \(w^b_m\), along with a \(N_s\) length vector \(\mathbf{1}\) of all ones and a \(N_s\times N_s\) diagonal weight matrix \(\mathbf W_{ii}=w_i\), the conservation property of classical FR originates from Gaussian integration as 
\begin{equation}
\begin{split}
(\mathbf{1}^T\mathbf{W}\mathbf{D}_k)_j&=\int_{\hat{\Omega}}\frac{\partial\ell_j(\boldsymbol{\xi})}{\partial \xi_k}dV=\int_{\partial\hat{\Omega}}\ell_j(\boldsymbol{\xi})n_k(\boldsymbol{\xi})dS=\sum_{m=1}^{N_b}n_{k}(\mathcal{X}_m^b)w_m^b(\mathbf{E}_m^T)_j,\\
\mathbf{1}^T\mathbf{W}\mathbf{G}_m&=\int_{\partial\hat{\Omega}}\sum_{k=1}^dg_{m,k}(\boldsymbol{\xi})n_k(\boldsymbol{\xi})dS=w^b_m,\\ &\Rightarrow\mathbf{1}^T\mathbf{W}(\mathbf{D}_k-\sum_{m=1}^{N_b}n_k(\mathcal{X}^b_m)\mathbf{G}_{m}\mathbf{E}_m^T)=0.
\end{split}
\end{equation}
The consistency of element-wise numerical fluxes ensures global conservation properties.
\end{proof}

The FR method achieves equivalence with the nodal DG method only when employing specific correction functions with particular solution point distributions. Specifically, at Gauss-Legendre (GL) nodes, FR reduces to nodal DG when using the \(g_{DG}\) type correction function; At Legendre-Gauss-Lobatto (LGL) nodes, FR reduces to nodal DG with the \(g_{HU}\) type correction function \cite{FR2011,FRN}. Consider that DG method derives from a weighted residual formulation using basis functions as test functions. Thus, degeneration to DG requires the correction function and Lagrange interpolation polynomials at solution points to satisfy relation \(\mathbf{W}\mathbf{G}_m=w_m^b\mathbf{E}_m\).

\begin{definition}
     For a given set of solution points, the nodal DG method guarantees that its differential operator satisfies the following summation-by-parts (SBP) condition based on the divergence theorem \cite{ES2018}
     \begin{equation}
         \mathbf{W}\mathbf{D}_k+(\mathbf{W}\mathbf{D}_k)^T=\sum_{m=1}^{N_b}n_k(\mathcal{X}^b_m)w^b_m\mathbf{E}_m\mathbf{E}_m^T.
     \end{equation}
\end{definition}
\begin{remark}
   Since the SBP property depends solely on the interpolation vectors generated by the solution points and is independent of the correction function gradient vectors, the classical FR scheme \eqref{clfr} can guarantee conservation properties with arbitrary correction functions, but it cannot inherently ensure entropy stability through simple differential operator reconstruction, unlike the DG method.
\end{remark}

\subsection{Entropy-conserving/stable FR via generalized entropy projection}
We introduce two key components to address the SBP property failure caused by solution point-correction function mismatches: 
\begin{itemize}
    \item A generalized interpolation vector \(\mathbf{E}^b_m\) from correction functions which satisfies \(\mathbf{WG}_m=w_m^b\mathbf{E}^b_m\).
    \item A DG-consistent gradient vector \(\mathbf{G}^s_m\) from solution points which satisfies \(\mathbf{WG}^s_m=w_m^b\mathbf{E}_m\).
\end{itemize}
\begin{definition}
For a given set of solution points and correction functions, define the generalized interpolation vector $\mathbf{E}^b_m$ satisfying $\mathbf{W}\mathbf{G}_m = w_m^b \mathbf{E}^b_m$, and the DG-consistent gradient vector $\mathbf{G}^s_m$ satisfying $\mathbf{W}\mathbf{G}^s_m = w_m^b \mathbf{E}_m$. Furthermore, the generalized entropy projection at boundary points is defined as
\begin{equation}
   \boldsymbol{u}_m^b=\boldsymbol{u}(\boldsymbol{v}_m^b),\quad\boldsymbol{v}_m^b=(\mathbf{E}_m^b)^T\mathbf{V},\quad \mathbf{V}_i=\boldsymbol{v}(\boldsymbol{u}_i).
\end{equation}
The final scheme is obtained by reconstructing \(\mathbf{R}_k\) in \eqref{clfr} as
\begin{equation}
\begin{split}
    \mathbf{R}_k^*=&\left[\left(2\mathbf{D}_k-\sum_{m=1}^{N_b}n_k(\mathcal{X}^b_m)\mathbf{G}_m^s\mathbf{E}_m^T\right)\circ \mathbf{F}^\#_k\right]\mathbf{1}\\&+\sum_{m=1}^{N_b} n_k(\mathcal{X}^b_m)\left(\mathbf{G}_m^s\circ \mathbf{F}^{\#,b}_{m,k}-\mathbf{G}_m\left((\mathbf{E}_m\circ \mathbf{F}^{\#,b}_{m,k})^T \mathbf{1}\right)\right)
\end{split}
    \label{ecfr}
\end{equation}
where \(\circ\) denotes the Hadamard product, \((\mathbf{F}_k^\#)_{ij}=\boldsymbol{f}^*_k(\boldsymbol{u}_i,\boldsymbol{u}_j)\) is a \(N_s\times N_s\) matrix computed using two-point entropy-conserving fluxes, and \((\mathbf{F}^{\#,b}_{m,k})_i=\boldsymbol{f}^*_k(\boldsymbol{u}_m^b,\boldsymbol{u}_i)\) is a \(N_s\) length vector obtained by cross-computing entropy-conserving fluxes between solution points and boundary point.
\end{definition}

For analytical convenience, we leverage the SBP property and the mathematical property that \( \mathbf{C}(\mathbf{A} \circ \mathbf{B}) = (\mathbf{C}\mathbf{A}) \circ \mathbf{B} \) (for diagonal matrix \( \mathbf{C} \)) to obtain
\begin{equation}
\begin{split}
\mathbf{W}\mathbf{R}^*_k=&\left[(\mathbf{WD}_k-(\mathbf{WD}_k)^T)\circ \mathbf{F}^\#_k\right]\mathbf{1}\\&+\sum_{m=1}^{N_b} n_k(\mathcal{X}^b_m)w_m^b(\mathbf{E}_m\circ \mathbf{F}^{\#,b}_{m,k}-\mathbf{E}_m^b(\mathbf{E_m}\circ \mathbf{F}^{\#,b}_{m,k})^T \mathbf{1}).
\end{split}
    \label{temp}
\end{equation}
\begin{theorem}
    The reconstruction approach in \eqref{ecfr} preserves the conservation properties of the FR method.
\end{theorem}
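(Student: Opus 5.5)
We prove that $\mathbf{1}^T\mathbf{W}\mathbf{R}^*_k$ equals a pure boundary term involving only boundary-point quantities. Once a matching numerical flux is used on the neighbouring element, these terms telescope, just as in the conservation proof of the classical FR scheme \eqref{clfr}. We work with the weighted form \eqref{temp}, left-multiply by $\mathbf{1}^T$, and treat the volume and boundary contributions separately.

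\textbf{Volume part.} Write $\mathbf{Q}_k=\mathbf{WD}_k$. We have
\begin{equation*}
\mathbf{1}^T\left[(\mathbf{Q}_k-\mathbf{Q}_k^T)\circ\mathbf{F}^\#_k\right]\mathbf{1}=\sum_{i,j}\left((\mathbf{Q}_k)_{ij}-(\mathbf{Q}_k)_{ji}\right)\boldsymbol{f}^*_k(\boldsymbol{u}_i,\boldsymbol{u}_j).
\end{equation*}
The two-point entropy-conserving flux is symmetric, so $(\mathbf{F}^\#_k)_{ij}=(\mathbf{F}^\#_k)_{ji}$. The double sum pairs a skew-symmetric matrix with a symmetric one and therefore vanishes. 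This step uses only the SBP identity already exploited in deriving \eqref{temp}.

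\textbf{Boundary part.} For each $m$ the contribution is
\begin{equation*}
n_k(\mathcal{X}^b_m)w^b_m\left(\mathbf{1}^T(\mathbf{E}_m\circ\mathbf{F}^{\#,b}_{m,k})-(\mathbf{1}^T\mathbf{E}^b_m)(\mathbf{E}_m\circ\mathbf{F}^{\#,b}_{m,k})^T\mathbf{1}\right).
\end{equation*}
We need $\mathbf{1}^T\mathbf{E}^b_m=1$. By definition $w^b_m\mathbf{1}^T\mathbf{E}^b_m=\mathbf{1}^T\mathbf{W}\mathbf{G}_m$, and in the proof of the conservation theorem for \eqref{clfr} this was shown to equal $w^b_m$ via the correction condition \eqref{correct}. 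Both terms then equal the same scalar $(\mathbf{E}_m\circ\mathbf{F}^{\#,b}_{m,k})^T\mathbf{1}$, so the boundary contribution cancels for each $m$ separately.

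\textbf{Combining.} We obtain $\mathbf{1}^T\mathbf{W}\sum_k\mathbf{R}^*_k=0$. Multiplying the modified scheme by $\mathbf{1}^T\mathbf{W}$ then gives
\begin{equation*}
\frac{d}{dt}\mathbf{1}^T\mathbf{W}\mathbf{U}=-\sum_m(\mathbf{1}^T\mathbf{W}\mathbf{G}_m)\,\boldsymbol{f}^*_m\cdot\boldsymbol{n}_m=-\sum_m w^b_m\,\boldsymbol{f}^*_m\cdot\boldsymbol{n}_m.
\end{equation*}
This has exactly the same structure as in the classical FR case. Consistency of the interface numerical flux between adjacent elements then yields global conservation.

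\textbf{Main obstacle.} The delicate step is the boundary cancellation. It depends on the identity $\mathbf{1}^T\mathbf{E}^b_m=1$ holding for arbitrary correction functions, which in turn rests on the divergence-theorem evaluation $\mathbf{1}^T\mathbf{WG}_m=w^b_m$. That evaluation requires the solution-point quadrature to integrate $\sum_k\partial g_{m,k}/\partial\xi_k$ exactly. We take this as assumed, as in the classical proof. A secondary point is that the symmetry of $\boldsymbol{f}^*_k$ must be invoked explicitly for the volume term.
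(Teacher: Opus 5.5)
Your proof is correct and follows essentially the same route as the paper. The volume term $\mathbf{1}^T\bigl[(\mathbf{WD}_k-(\mathbf{WD}_k)^T)\circ\mathbf{F}^\#_k\bigr]\mathbf{1}$ vanishes by skew-symmetry against a symmetric flux matrix, and each boundary contribution cancels because $\mathbf{1}^T\mathbf{E}^b_m=1$. You are slightly more careful than the paper: you correctly name symmetry of $\boldsymbol{f}^*_k$, not mere consistency, as the property needed, and you derive $\mathbf{1}^T\mathbf{E}^b_m=1$ from $\mathbf{1}^T\mathbf{WG}_m=w^b_m$ rather than simply assuming it.
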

\begin{proof}
    Let \(\mathbf{Q}=\mathbf{WD}_k-(\mathbf{WD}_k)^T\), we obtain \(\mathbf{Q}^T=-\mathbf{Q}\). From the consistency of two-point flux and given \(\mathbf{1}^T\mathbf{E}_m^b=1\), thus,
    \begin{equation}
\mathbf{1}^T(\mathbf{Q}\circ\mathbf{F}_k^\#)\mathbf{1}=\frac{1}{2}\mathbf{1}^T\left[(\mathbf{Q}+\mathbf{Q}^T)\circ \mathbf{F}^\#_k\right]\mathbf{1}=0,
\end{equation}
\begin{equation}
\mathbf{1}^T\mathbf{E}_m\circ \mathbf{F}^{\#,b}_{m,k}=\mathbf{1}^T\mathbf{E}_m^b(\mathbf{E_m}\circ \mathbf{F}^{\#,b}_{m,k})^T \mathbf{1}.
    \end{equation}
We finally obtain \(\mathbf{1}^T\mathbf{WR}^*_k=0\), the conservation property is therefore rigorously proved.
\end{proof}
\begin{theorem}
    If the numerical flux at boundary points is entropy-conserving, then the FR scheme \eqref{ecfr} is discretely entropy conserving for arbitrary nodal distributions and correction functions. If the numerical flux is entropy-dissipative, the scheme is discretely entropy stable.
\end{theorem}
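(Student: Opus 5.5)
Test the semi-discrete scheme with the entropy variables, i.e. left-multiply the scheme (with $\mathbf{R}_k$ replaced by $\mathbf{R}_k^*$) by $\mathbf{V}^T\mathbf{W}$. Since $\mathbf{W}$ is diagonal and $\mathbf{V}_i=\boldsymbol{v}(\boldsymbol{u}_i)$, the time term gives $\mathbf{V}^T\mathbf{W}\,\partial_t\mathbf{U}=\sum_i w_i\,\partial_t\eta(\boldsymbol{u}_i)=\frac{d}{dt}\mathbf{1}^T\mathbf{W}\boldsymbol{\eta}$. The interface term gives
\[
\sum_m \mathbf{V}^T\mathbf{W}\mathbf{G}_m(\boldsymbol{f}^*_m\cdot\boldsymbol{n}_m)=\sum_m w_m^b (\boldsymbol{v}_m^b)^T(\boldsymbol{f}^*_m\cdot\boldsymbol{n}_m),
\]
because $\mathbf{W}\mathbf{G}_m=w^b_m\mathbf{E}^b_m$ and $(\mathbf{E}^b_m)^T\mathbf{V}=\boldsymbol{v}^b_m$. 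This is exactly why the generalized entropy projection is defined through $\mathbf{E}^b_m$ rather than $\mathbf{E}_m$. It then remains to show that $\mathbf{V}^T\mathbf{W}\mathbf{R}^*_k$ reduces to a boundary quantity
\[
-\sum_m n_k(\mathcal{X}^b_m)\,w^b_m\bigl(\boldsymbol{v}_m^b\boldsymbol{f}_k(\boldsymbol{u}^b_m)-\phi_k(\boldsymbol{u}^b_m)\bigr).
\]
This expression equals $-\sum_m n_k w^b_m\psi_k(\boldsymbol{u}^b_m)$, using $\boldsymbol{v}(\boldsymbol{u}^b_m)=\boldsymbol{v}^b_m$.

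For the volume term I will use \eqref{temp}. Set $\mathbf{Q}=\mathbf{WD}_k-(\mathbf{WD}_k)^T$ and apply the Tadmor relation $(\mathbf{V}_i-\mathbf{V}_j)^T\boldsymbol{f}^*_k(\boldsymbol{u}_i,\boldsymbol{u}_j)=\phi_{k,i}-\phi_{k,j}$ together with skew-symmetry. This gives
\[
\mathbf{V}^T(\mathbf{Q}\circ\mathbf{F}^\#_k)\mathbf{1}=\tfrac12\sum_{ij}Q_{ij}(\mathbf{V}_i-\mathbf{V}_j)^T\boldsymbol{f}^*_{ij}=\tfrac12\sum_{ij}Q_{ij}(\phi_i-\phi_j)=\boldsymbol{\phi}^T\mathbf{Q}^T\mathbf{1}.
\]
Next, $\mathbf{D}_k\mathbf{1}=0$ and the SBP property give $\mathbf{Q}^T\mathbf{1}=\mathbf{WD}_k^{T}\mathbf{1}$-type terms, which collapse to $-\sum_m n_k w^b_m\mathbf{E}_m(\mathbf{E}_m^T\mathbf{1})=-\sum_m n_k w^b_m\mathbf{E}_m$, using $\mathbf{E}_m^T\mathbf{1}=1$ and $\mathbf{1}^T\mathbf{WD}_k=\sum_m n_kw^b_m\mathbf{E}_m^T$. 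The volume part therefore yields $-\sum_m n_kw^b_m\mathbf{E}_m^T\boldsymbol{\phi}$.

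For the boundary correction term in \eqref{temp}, I will write $s_m=(\mathbf{E}_m\circ\mathbf{F}^{\#,b}_{m,k})^T\mathbf{1}=\sum_i(\mathbf{E}_m)_i\boldsymbol{f}^*_k(\boldsymbol{u}^b_m,\boldsymbol{u}_i)$. The $m$-th contribution is then
\[
n_kw^b_m\Bigl(\sum_i(\mathbf{E}_m)_i\mathbf{V}_i^T\boldsymbol{f}^*(\boldsymbol{u}^b_m,\boldsymbol{u}_i)-(\boldsymbol{v}^b_m)^Ts_m\Bigr)=n_kw^b_m\sum_i(\mathbf{E}_m)_i(\mathbf{V}_i-\boldsymbol{v}^b_m)^T\boldsymbol{f}^*(\boldsymbol{u}^b_m,\boldsymbol{u}_i).
\]
Because $\boldsymbol{v}(\boldsymbol{u}^b_m)=\boldsymbol{v}^b_m$ exactly, Tadmor's condition applies pairwise and turns each summand into $\phi_i-\phi(\boldsymbol{u}^b_m)$. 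Using $\mathbf{E}_m^T\mathbf{1}=1$, the contribution becomes $n_kw^b_m(\mathbf{E}_m^T\boldsymbol{\phi}-\phi_k(\boldsymbol{u}^b_m))$. Adding this to the volume part cancels the $\mathbf{E}_m^T\boldsymbol{\phi}$ terms, so
\[
\mathbf{V}^T\mathbf{W}\mathbf{R}_k^*=-\sum_m n_k(\mathcal{X}^b_m)w^b_m\,\phi_k(\boldsymbol{u}^b_m).
\]

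Combining the three terms, the element entropy rate is
\[
\frac{d}{dt}\mathbf{1}^T\mathbf{W}\boldsymbol{\eta}=\sum_m w^b_m\Bigl(\sum_kn_k\phi_k(\boldsymbol{u}^b_m)-(\boldsymbol{v}^b_m)^T\boldsymbol{f}^*_m\cdot\boldsymbol{n}_m\Bigr),
\]
and this depends only on the projected boundary states and the interface flux. I then sum over elements, pairing the two sides of each face; they share boundary weights and have opposite normals under conforming meshes. Each face contributes $-w^b_m\bigl([\![\boldsymbol{v}]\!]\boldsymbol{f}^*\cdot\boldsymbol{n}-[\![\phi\cdot\boldsymbol{n}]\!]\bigr)$. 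This is zero for an entropy-conserving numerical flux and nonpositive for an entropy-dissipative one, which gives the two claims; physical boundaries are handled as usual.

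The main obstacle is bookkeeping in \eqref{temp}. I must verify that $\mathbf{Q}^T\mathbf{1}$ reduces as claimed, which requires exactness $\mathbf{D}_k\mathbf{1}=0$ and the SBP identity. I must also get the sign conventions of $n_k$ consistent between the volume and correction terms, so that the $\mathbf{E}_m^T\boldsymbol{\phi}$ pieces cancel rather than double. Finally, the SBP identity assumed in \eqref{temp} must hold for arbitrary nodal sets; for this I rely on \eqref{temp} as already established, noting that the quadrature must be exact for $\ell_i\,\partial\ell_j$.
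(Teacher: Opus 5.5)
Your proposal is correct and follows the paper's proof essentially step for step. You test with $\mathbf{V}^T\mathbf{W}$, work from \eqref{temp}, apply Tadmor's condition to the skew-symmetric volume term and (via $\boldsymbol{v}(\boldsymbol{u}^b_m)=\boldsymbol{v}^b_m$) to the hybridized boundary term, cancel the $\mathbf{E}_m^T\boldsymbol{\phi}$ pieces, and pair faces.

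Two slips should be fixed, though neither affects the conclusion. First, your announced target $-\sum_m n_k w^b_m\psi_k(\boldsymbol{u}^b_m)$ is not the value of $\mathbf{V}^T\mathbf{W}\mathbf{R}^*_k$; that value is $-\sum_m n_k w^b_m\phi_k(\boldsymbol{u}^b_m)$, as you in fact derive, and $\psi_k$ only appears after adding the interface term with a consistent flux. Second, $\tfrac12\sum_{ij}Q_{ij}(\phi_i-\phi_j)$ equals $\boldsymbol{\phi}^T\mathbf{Q}\mathbf{1}$ with $\mathbf{Q}\mathbf{1}=-\sum_m n_k w^b_m\mathbf{E}_m$, not $\boldsymbol{\phi}^T\mathbf{Q}^T\mathbf{1}$ (indeed $\mathbf{Q}^T\mathbf{1}=+\sum_m n_k w^b_m\mathbf{E}_m$), so your two sign errors cancel.
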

\begin{proof}
    Given that \(\mathbf{D}_k\mathbf{1}=0\), the volume integral term consequently satisfies 
    \begin{equation}
    \begin{split}
&\mathbf{V}^T(\mathbf{Q}\circ\mathbf{F}^\#_k)\mathbf{1}\\&=\sum_{i=1}^{N_s}\sum_{j=1}^{N_s}\boldsymbol{v}_i\mathbf{Q}_{ij}\boldsymbol{f}^*_k(\boldsymbol{u}_i,\boldsymbol{u}_j)=\frac{1}{2}\sum_{i=1}^{N_s}\sum_{j=1}^{N_s}\mathbf{Q}_{ij}(\boldsymbol{v}_i-\boldsymbol{v}_j)\boldsymbol{f}^*_k(\boldsymbol{u}_i,\boldsymbol{u}_j)\\&=\frac{1}{2}\sum_{i=1}^{N_s}\sum_{j=1}^{N_s}\mathbf{Q}_{ij}(\phi_{i,k}-\phi_{j,k})=\frac{1}{2}(\boldsymbol{\Phi}_k^T\mathbf{Q1}-\mathbf{1}^T\mathbf{Q}\boldsymbol{\Phi}_k)\\&=-\mathbf{1}^T\mathbf{WD}_k\boldsymbol{\Phi}_k=-\sum_{m=1}^{N_b}n_k(\mathcal{X}^b_m)w^b_m\mathbf{E}_m^T\boldsymbol{\Phi}_k.
    \end{split}
    \end{equation}
Similarly, the hybridized boundary term satisfies
\begin{equation}
\begin{split}
    &\mathbf{V}^T( \mathbf{E}_m\circ \mathbf{F}^{\#,b}_{m,k}-\mathbf{E}_m^b(\mathbf{E_m}\circ \mathbf{F}^{\#,b}_{m,k})^T \mathbf{1})\\&=\sum_{i=1}^{N_s}(\mathbf{E}_m)_i(\boldsymbol{v}_i-\boldsymbol{v}_m^b)\boldsymbol{f}^*_k(\boldsymbol{u}_m^b,\boldsymbol{u}_i)=\sum_{i=1}^{N_s}(\mathbf{E}_m)_i(\phi_{i,k}-\phi_{m,k}^b)\\
    &=\mathbf{E}_m^T\boldsymbol{\Phi}_k-\phi_{m,k}^b\mathbf{E}_m^T\mathbf{1}=\mathbf{E}_m^T\boldsymbol{\Phi}_k-\phi_{m,k}^b.
\end{split}
\end{equation}
Finally obtained \(\mathbf{V}^T\mathbf{WR}^*_k=-\sum_{m=1}^{N_b}n_k(\mathcal{X}^b_m)w^b_m\phi_{m,k}^b\), the local entropy change rate is
\begin{equation}
    \mathbf{V}^T\mathbf{W}\frac{\partial \mathbf{U}}{\partial t}=\sum_{m=1}^{N_b}w^b_m(\boldsymbol{\phi}_{m}^b\cdot\boldsymbol{n}_m)-\sum_{m=1}^{N_b}w_m^b(\boldsymbol{v}_m^b\boldsymbol{f}^*_{m}\cdot \boldsymbol{n}_m).
\end{equation}
If the boundary numerical flux \(\boldsymbol{f}^*_{m}\cdot\boldsymbol{n}_m\) is entropy-conserving, i.e., it satisfies \((\boldsymbol{v}^b_m-\boldsymbol{v}^+)\boldsymbol{f}^*_{m}\cdot\boldsymbol{n}_m=\boldsymbol{\phi}_{m}^b\cdot\boldsymbol{n}_m-\boldsymbol{\phi}^+\cdot\boldsymbol{n}_m\), then the right-hand side vanishes after summation over all faces, and the scheme is discretely entropy conserving. If the boundary numerical flux is entropy-dissipative, then the right-hand side is non-positive, and the scheme is discretely entropy stable. This completes the proof.
\end{proof}
\begin{remark}
     The generalized interpolation process eliminates entropy variation caused by mismatches between correction functions and solution points. Notably, even when using LGL nodes which include boundary points, if the correction function differs from \(g_{HU}\), a generalized interpolation based on the correction function remains necessary. This computes a hybridized boundary value that differs from the solution values at boundary nodes. It should be emphasized that the present generalized interpolation approach is mathematically not equivalent to the formulation in \cite{FR2025}. The weight-adjusted approach modifies the quadrature weights, whereas our method keeps the original weights and instead modifies the interpolation operators, resulting in a different semi-discrete operator.
\end{remark}

Due to the modified interpolation approach, this method cannot directly reduce to classical FR for linear problems. As established in \cite{FR2007}, the accuracy order of FR schemes fundamentally depends on the polynomial degree of the correction functions. However, by analyzing the form of VCJH correction functions, we observe that in elements with \(N+1\) degrees of freedom, the interpolation generated by correction functions achieves only \(N\)-order accuracy \cite{FR2011}. This implies solution degradation, where \(N+1\)-order accuracy requires orthogonality. Consequently, such methods exhibit \(N\)-order accuracy for arbitrary correction functions, and only achieve \(N+1\)-order accuracy when the correction functions precisely align with the solution points. This represents an inherent trade-off for achieving discretely entropy stability in FR, a conclusion substantiated by the results presented in \cite{FR2025}.

\section{Numerical results}
We conduct numerical experiments using the isentropic vortex solution of the Euler equations, and we have verified for various nodal distributions and correction functions that when only entropy-conserving fluxes are employed, the total entropy change remains at machine precision. In the figures presented below, entropy-dissipative fluxes (i.e., entropy-stable fluxes with dissipation) are used.

\begin{figure}
    \centering
    \subfigure[LGL, \(N=3\)]{\includegraphics[width=0.49\linewidth]{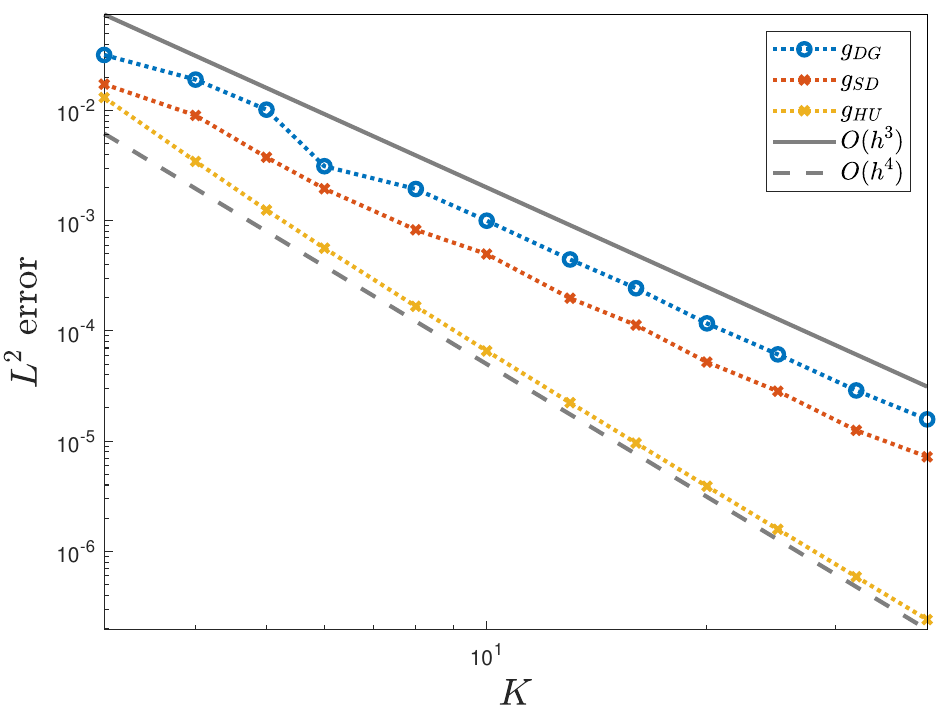}}
    \subfigure[GL, \(N=4\)]{\includegraphics[width=0.49\linewidth]{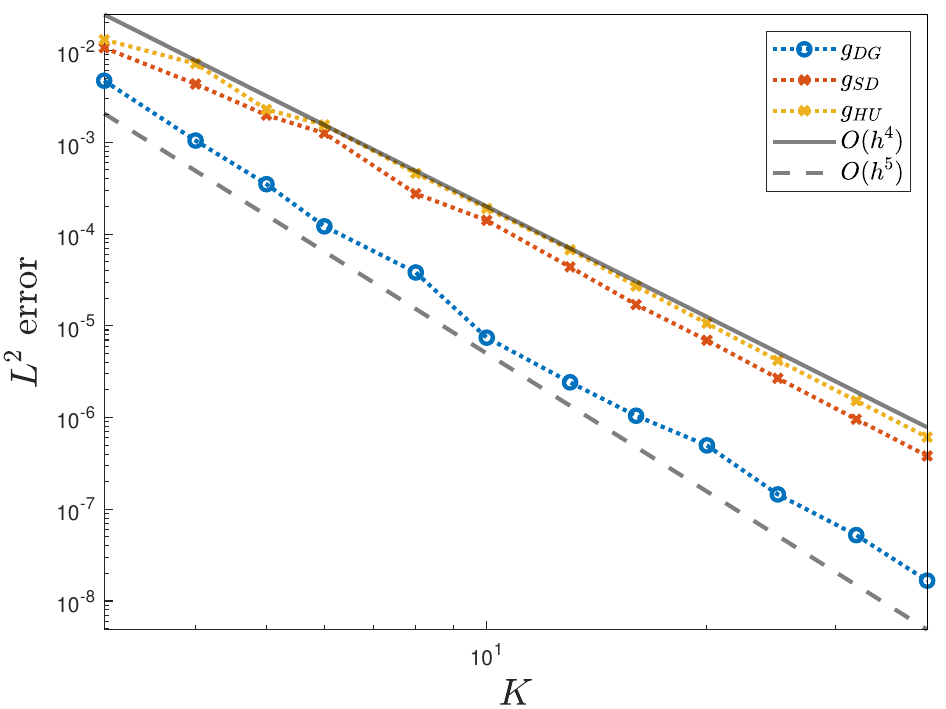}}
    \caption{Accuracy verification: \(L^2\) errors of the proposed discretely entropy stable FR scheme. The observed convergence orders are consistent with the theoretical predictions.}
    \label{fig:accuracy}
\end{figure}

First, we assess the convergence orders. Figure~\ref{fig:accuracy} presents the \(L^2\) errors of the proposed discretely entropy conserving/stable FR scheme on both Legendre-Gauss-Lobatto (LGL) and Gauss-Legendre (GL) nodes. The observed convergence orders are consistent with the theoretical accuracy predictions.

\begin{figure}
    \centering
    \subfigure[\(N=3, K=24\)]{\includegraphics[width=0.49\linewidth]{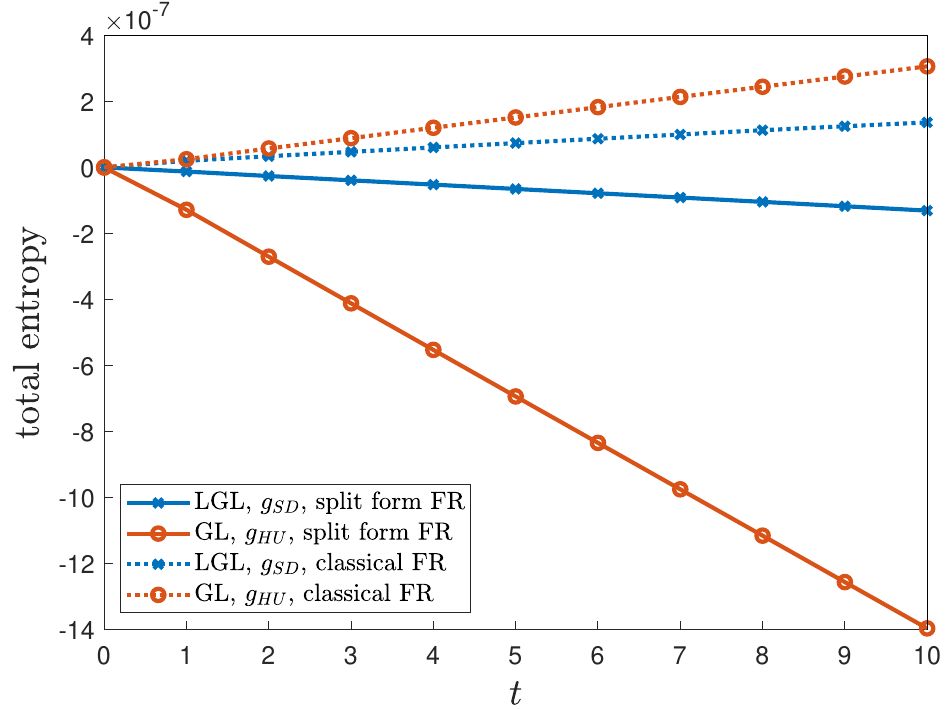}}
    \subfigure[\(N=5, K=16\)]{\includegraphics[width=0.49\linewidth]{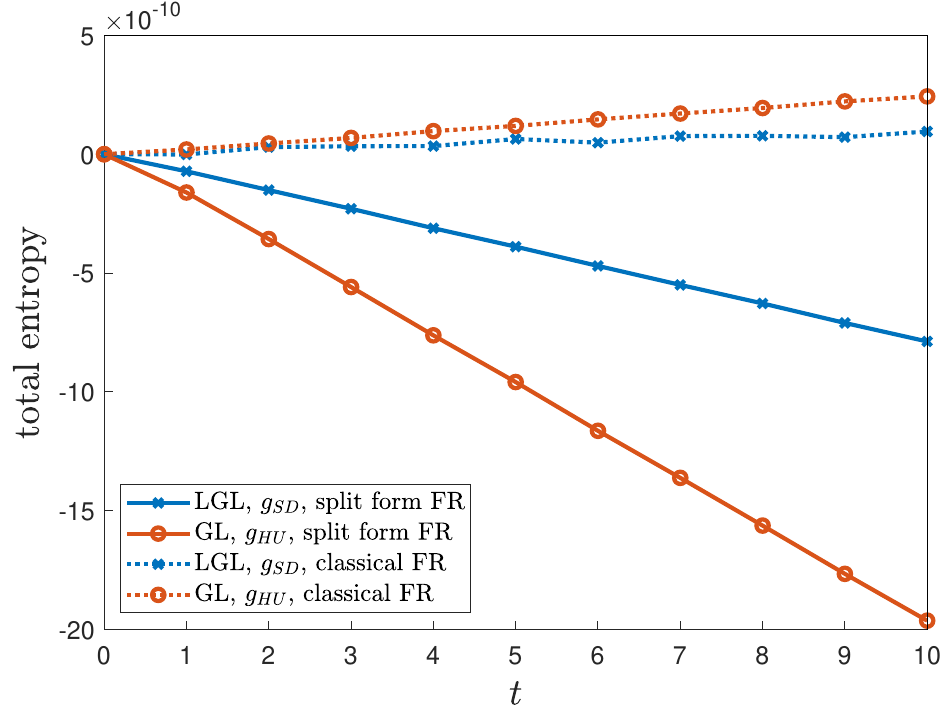}}
    \caption{Total entropy variation for the isentropic vortex problem using entropy-dissipative fluxes. Without the proposed volume-integral treatment, the total entropy would exhibit an anomalous increase.}
    \label{fig:entropy}
\end{figure}

Next, we examine the total entropy variation. Figure~\ref{fig:entropy} shows the total entropy variation when entropy-dissipative fluxes are used. It can be observed that without the proposed volume-integral treatment, the total entropy would still exhibit an anomalous increase, whereas the proposed scheme remains stable.

\section{Conclusions} 
In this work, we proposed a novel entropy-conserving FR extension based on hybridized differential operators and generalized entropy projection, with rigorous proofs of both conservation and discrete entropy stability. The proposed methodology achieves entropy conservation for arbitrary correction functions and nodal distributions. The complete derivation maintains a generalized formulation consistent with classical FR schemes, facilitating application to arbitrary element types.  Crucially, the method is mathematically non-equivalent to the weight-adjusted FR approach of \cite{FR2025}.

\section*{Declarations}

\subsection*{Competing Interests}
The authors have no conflicts of interest to declare that are relevant to the content of this article.

\subsection*{Ethics approval} Not applicable. This work is purely computational and does not involve human participants, animals, or their data.

\subsection*{Data and Code Availability}
The simplified code and data required to reproduce the numerical results reported in this paper are available from the corresponding author upon reasonable request. If the manuscript is considered potentially acceptable, we can prepare a simplified open‑source implementation at the revision stage to ensure complete reproducibility.

\subsection*{Authors' Contributions}
All authors contributed to the study conception and design. The specific contributions according to the CRediT taxonomy are as follows:

\begin{itemize}
    \item \textbf{Geng Liang}: Conceptualization, Investigation, Methodology, Writing -- original draft.
    \item \textbf{Junjie Wang}: Validation.
    \item \textbf{Hui Xu}: Writing -- review \& editing, Supervision, Project administration.
\end{itemize}

\bibliographystyle{spmpsci}
\bibliography{ref}

\end{document}